\documentclass[11pt]{article}
\usepackage[T1]{fontenc}
\usepackage{mathtools}
\usepackage{amssymb}
\usepackage{amsthm}
\usepackage{mathptmx}
\usepackage{semantex}
\usepackage{xfrac}
\usepackage{tikz-cd}
\usepackage[margin=0.8in]{geometry}
\usepackage{blindtext}
\usepackage{tgcursor}
\usepackage{verbatim}
\usepackage{comment}
\usepackage{todonotes}
\usepackage{bbold}
\usepackage{titlesec}
\usepackage{authblk}

\usepackage[backend=biber,style=numeric]{biblatex}
\usepackage{csquotes}

\NewSymbolClass\MyBinaryOperator[
    define keys={
        {Lder}{command=\overset{L}},
        {Rder}{upper=R},
    },
]
\NewObject\MyBinaryOperator\tensor{\otimes}[
    define keys={
        {der}{Lder},
    },
]
\NewObject\MyBinaryOperator\fibre{\times}[
    define keys={
        {der}{Rder},
    },
]

\DeclareMathOperator{\Id}{Id}

\DeclareMathOperator{\Hom}{Hom}
\DeclareMathOperator{\Ext}{Ext}
\DeclareMathOperator{\Tor}{Tor}

\DeclareMathOperator{\lin}{lin}

\DeclareMathOperator{\Mod}{Mod}

\DeclareMathOperator{\Stab}{Stab}
\DeclareMathOperator{\Vect}{Vect}

\titleformat{\section}
  {\normalfont\scshape\filcenter}{\thesection}{1em}{}

\def\arr{\longrightarrow}
\def\dt{\text{.}}

\theoremstyle{plain}
\newtheorem{theorem}{Theorem}[section]
\newtheorem{lemma}[theorem]{Lemma}
\newtheorem{corollary}[theorem]{Corollary}
\newtheorem{df}[theorem]{Definition}
\newtheorem{remark}[theorem]{Remark}

\title{Bounded functor cohomology and comparison theorem}
\author{Karol Janowicz}
\affil{Institute of Mathematics, University of Warsaw, Banacha 2, 02-097 Warsaw, Poland}
\date{}

\begin{document}

\maketitle

\section*{Introduction}
In this work we present a possible approach to compute cohomology of linear groups over finite fields via functor cohomology. More precisely, we consider bounded polynomial functor categories (introduced by M. Chałupnik and P. Jaśniewski in \cite{chałupnik2022strict}) and prove the unstable version of the weak comparison theorem with the ordinary functor category. The inspiration for this result comes directly from the work of Franjou-Friedlander-Suslin-Scorichenko in \cite{franjou1999general} and from the work of Cline-Parshall-Scot-van der Kallen presented in \cite{Cline1977RationalAG}. The unstable character of the latter results was crucial in the proof.

In the first section we show that the cohomology of the general linear group over a finite field may be directly reformulated into the $\Ext$ algebra of the exterior power functor $\Lambda^n$. This is explicitly done by the extension by zero procedure that passes from representations of the general linear group to representations of the full monoid of matrices. This part is inspired by the work of Kuhn in \cite{Kuhn_1994}, where the extension by zero functor $k[GL_n]-\Mod \arr k[M_n] - \Mod$ appears as a part of a certain recollement of abelian categories. We prove that this recollement upgrades to the recollement of the corresponding derived categories and therefore there is a connection between cohomology of general linear group and the functor cohomology. In the second section we study the connections between the cohomology of ordinary functors and the cohomology of strict polynomial functors (in a Friedlander-Suslin sense, see \cite{Friedlander_1997}), all functor categories remaining bounded. In particular, we prove the unstable version of the weak comparison theorem proven in \cite{franjou1999general}. Here we vastly rely on the work of Cline-Parshall-Scot-van der Kallen in \cite{Cline1977RationalAG}.

\section{Extension by zero and general linear cohomology}
Let $k$ be a finite field and let $n>0$ be any positive integer. Let $M_n(k)$ denote a monoid of matrices $n \times n$ with coefficients $k$ and let $GL_n(k)$ be a general linear group over $k$ (we will omit the coefficients, if it is clear what they are). Denote by $\mathcal{F}_n$ the category of all functors from the category $\Vect_{\leq n}$ of vectors spaces of dimension $\leq n$ into the category of finite dimensional vector spaces $\Vect$ over $k$. We recall that by the Gabriel-Mitchell theorem (cf. \cite{bass1968algebraic}, II Theorem 1.3) we have an equivalence of categories $\mathcal{F}_n \simeq k[M_n]-\Mod$, where $k[M_n]$ denotes the semigroup ring of the monoid of matrices. The equivalence is given by the evaluation functor $F \mapsto F(k^n)$ and therefore we will sometimes implicitly identify a functor $F \in \mathcal{F}_n$ with the corresponding $k[M_n]$-module $F(k^n)$. Let $j:GL_n(k) \arr M_n(k)$ be the inclusion map and let $j_*:k[M_n]-\Mod \arr k[GL_n] - \Mod$ be the corresponding forgetful functor. Consider the ring map $i:k[M_n] \arr k[GL_n]$ mapping every non-singular matrix to itself and every singular matrix to $0$ (then extend linearly). Thus there is an induced functor $i_*:k[GL_n]-\Mod \arr k[M_n]-\Mod$ and $j_* \circ i_* = \Id$ ($i_*$ will be called an \textbf{extension by zero functor}). In particular, the map
\[    \Ext^*(A,B) \arr \Ext^*(i_*A, i_*B)     \]
is injective for any $A,B \in k[GL_n]-\Mod$.

In this section, we will construct a projective resolution of $i_*k[GL_n]$. Namely, the resolving complex will consist of submodules $k[M_n \cdot e]$, where $e \in M_n$ is an idempotent. Note that the module $k[M_n \cdot e] \subset k[M_n]$ is clearly projective, since it is a direct summand of a free module $k[M_n]$ (via a map $A \mapsto A \cdot e$ of $k[M_n]$-modules). 

Before moving on to the construction, we will introduce some notations. Let $L_1,\ldots, L_m$ be a full list of lines in $k^n$. Then we choose idempotents $e_{i_0,i_1, \ldots, i_k} \in M_n(k)$ whose kernel is exactly a subspace spanned by the lines $L_{i_0}, \ldots, L_{i_k}$. The choice can be made arbitrarily, since we only care about the left $k[M_n]$-module structure, not a $k[M_n]-k[GL_n]$-bimodule structure. 

Consider the projective module $k[M_ne_{i_0,i_1, \ldots, i_k}] \subset k[M_n]$. As a $k$-vector space, $k[M_ne_{i_0,i_1, \ldots, i_k}]$ is spanned by matrices vanishing on the span of lines $L_{i_0}, \ldots, L_{i_k}$. In particular, there are "restriction" maps
\[    k[M_ne_I] \arr k[M_ne_J]  \]
whenever $J \subset I$.

There is a short exact sequence
\[ 0 \arr k[M_n^{\text{sing}}] \arr k[M_n] \arr i_*k[GL_n] \arr 0 \]
where $k[M_n^{\text{sing}}]$ denotes the module spanned on singular matrices. We will resolve $k[M_n^{\text{sing}}]$ in a simplicial way, using the rank filtration. Define
\[  C_k:= \bigoplus_{i_0<i_1 < \ldots < i_k} k[M_ne_{i_0,i_1, \ldots, i_k}] \dt   \]
Then we define the differential
\[   \partial: C_k \arr C_{k-1}  \]
as follows. An element $(x_I)_I \in C_k$, where $I=(i_0 < i_1 < \ldots < i_k)$ are the increasing indices sequences, is mapped into
\[   \partial\big(x_{(i_0,\ldots, i_k)} \big) :=\sum_{j=0}^k (-1)^j x_{(i_0, \ldots, \widehat{i_j}, \ldots, i_k)} \dt  \]

Note that formally, an element $x_{(i_0, \ldots, \widehat{i_j}, \ldots, i_k)}$ in the right hand side of the above equation is the restriction of $x_{(i_0,\ldots, i_k)}$ 

A straightforward calculation shows, that $\partial \circ \partial =0$, so $C_\bullet$ is a complex.

\begin{theorem}
\label{resolution of GL_n}
For the complex $C_\bullet$ constructed above we have $H_n(C_\bullet)=0$ for $n>0$ and $H_0(C_\bullet)=k[M_n^{\text{sing}}]$.
\end{theorem}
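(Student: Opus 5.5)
The complex $C_\bullet$ has a natural $k$-basis indexed by pairs, so the plan is to decompose it matrix by matrix into pieces whose homology is the homology of a simplex. As a $k$-vector space, $C_k$ has basis consisting of pairs $(A, I)$, where $I=(i_0<\ldots<i_k)$ and $A\in M_n$ satisfies $A(L_{i_j})=0$ for all $j$. The restriction maps send the basis element $(A,I)$ to $(A,J)$: the restriction $k[M_ne_I]\arr k[M_ne_J]$ is just the inclusion of the span of matrices vanishing on $\bigoplus_{i\in I}L_i$ into the span of matrices vanishing on $\bigoplus_{j\in J}L_j$. So the differential preserves the matrix $A$, and as a complex of $k$-vector spaces
\[ C_\bullet = \bigoplus_{A\in M_n} C_\bullet^A , \]
where $C^A_k$ has basis the $k$-element subsets $I\subset S_A$ and $S_A=\{i : L_i\subset \ker A\}$. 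Homology may be computed as vector spaces, since the $k[M_n]$-module structure only matters for identifying $H_0$ as a module at the end.

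First I would identify each $C_\bullet^A$ with the simplicial chain complex of the full simplex $\Delta^{S_A}$ on the vertex set $S_A$, without the augmentation. The face maps and signs agree with the standard simplicial boundary.

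For $A$ invertible, $S_A=\emptyset$ and $C^A_\bullet=0$. For $A$ singular, $S_A$ is nonempty, and the full simplex is contractible, so $H_k(C^A_\bullet)=0$ for $k>0$ and $H_0(C^A_\bullet)\cong k$. Concretely, the augmentation $C^A_0\arr k\cdot A$, sending $(A,\{i\})\mapsto A$, induces this isomorphism. If I wanted to avoid quoting contractibility, I would write down an explicit contracting homotopy: fix $s=\min S_A$ and take the cone map $I\mapsto \{s\}\cup I$ with the appropriate sign, or $0$ when $s\in I$. The usual identity $\partial h+h\partial=\Id-\epsilon$ then holds.

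Summing over $A$ gives $H_k(C_\bullet)=0$ for $k>0$, and shows that the augmentation
\[ \varepsilon: C_0=\bigoplus_i k[M_ne_i]\arr k[M_n^{\text{sing}}], \qquad (x_i)\mapsto \sum_i x_i , \]
induces an isomorphism $H_0(C_\bullet)\cong k[M_n^{\text{sing}}]$. It remains to check two things. First, $\varepsilon$ is a map of left $k[M_n]$-modules. This holds because each $k[M_ne_i]\subset k[M_n]$ is a left submodule and $\varepsilon$ is the sum of the inclusions. Second, $\varepsilon$ is surjective onto the singular span, since every singular $A$ kills some line.

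The main obstacle is bookkeeping rather than substance. I need to make sure that the restriction maps are literally the inclusions of spans of matrices, so that the splitting over $A$ is valid. I also need to check that the notation $H_n$ in the statement refers to an arbitrary degree, not the matrix size $n$. With these in place, the acyclicity reduces to the acyclicity of a simplex.
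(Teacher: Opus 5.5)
Your proof is correct and follows the paper's argument. The paper likewise writes $C_\bullet$ as the chains of the disjoint union $\bigsqcup_A \Delta_A$ of full simplices on the lines killed by each singular $A$; you additionally spell out what it leaves implicit: that the restriction maps are literal inclusions, an explicit contracting homotopy, and that the augmentation onto $k[M_n^{\text{sing}}]$ is $k[M_n]$-linear. One small indexing slip: with the paper's convention, $C^A_k$ has basis the $(k+1)$-element subsets of $S_A$, not the $k$-element ones, which is what your own formula for $C_0$ already assumes.
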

\begin{proof}
Note that the above complex is of the form 
\[    C_k=\bigoplus_{i_0<i_1 < \ldots < i_k}  k[X_{i_0} \cap \ldots \cap X_{i_k}]  \]
where $X_i$ is a set of matrices vanishing on line $L_i$. In other words, it is of the form
\[    C_k=k\Big[\bigsqcup_{i_0<i_1 < \ldots < i_k} X_{i_0} \cap \ldots \cap X_{i_k}\Big] \dt  \]
Let us consider a semi-simplicial set $S$ defined as follows: 
for any singular matrix $A$, define a simplex $\Delta_A$, whose vertices are in a $1-1$ correspondence with lines $L_1, \ldots, L_k$ on which $A$ vanishes. Then take $S=\bigsqcup \Delta_A$, where the sum is taken over all singular matrices $A \in M_n$. Note that the simplicial homology of $S$ is defined by the complex $C_\bullet$, and thus it is acylic in positive degrees and $H_0(C_\bullet)=k[M_n^{\text{sing}}]$.
\end{proof}

It follows that the complex
\[   C_\bullet \arr k[M_n] \arr 0  \]
is a projective resolution of $k[GL_n]$ which enables to do some homological computations. As an example we note the following corollary.

\begin{corollary}
The $k[M_n]$-module $i_* k[GL_n]$ is not projective for $n>1$.
\end{corollary}
\begin{proof}
We compute $\Tor_1^{k[M_n]}(i_* k[GL_n],\Lambda^{n-1})$ using the constructed resolution of $i_* k[GL_n]$. Tensoring it with a functor $F \in \mathcal{F}_n$ we obtain the complex whose terms are of the form 
\[  C_s \otimes_{k[M_n]} F= \bigoplus_{|I|=s}  F(e_I k^n)   \]
and the differential induced by the inclusions. If $F=\Lambda^{n-1}$, then for all $s>1$ we have $C_s \otimes \Lambda^{n-1}=0$, therefore
\[   i_* k[GL_n] \tensor[k[M_n],der] \Lambda^{n-1}= \ldots \arr 0 \arr \bigoplus_{k^{n-1} \subset k^n} \Lambda^{n-1}(k^{n-1}) \arr \Lambda^{n-1} (k^n) \arr 0 \arr \ldots  \]
so $\displaystyle \Tor_1^{k[M_n]}(i_* k[GL_n],\Lambda^{n-1})=\ker \Big( \bigoplus_{k^{n-1} \subset k^n} \Lambda^{n-1}(k^{n-1}) \arr \Lambda^{n-1} (k^n) \Big) \neq 0$.
\end{proof}

\begin{remark}
For $n=1$, the module $i_* k[GL_1]=i_*k[k^*]$ is the direct summand of $k[M_1]=k[k_{add}]$, hence is projective.
\end{remark}

\begin{remark}
We remark that there is an analogous resolution of $k[GL_n]$ as a right $k[M_n]$-module with modules of the form $k[E_I M_n]$ where $E_I$ is a projection on the corresponding subspace in $k^n$. 
\end{remark}

\begin{remark}
In the proof we haven't used what the coefficients of the representations are, so in the same we may construct the projective resolution of $R[M_n]$-module $i_*R[GL_n]$ for any commutative ring $R$.
\end{remark}

One of the most important corollaries of the above construction is the following result on the extension by zero functor $i_*$.

\begin{theorem}
\label{Kuhn's recollement on derived level}
Let $A,B$ be any $k[GL_n]$-modules. Then the map
\[   i_*: \Ext_{k[GL_n]}^*(A,B) \arr \Ext_{k[M_n]}^*(i_*A,i_*B)    \]
is an isomorphism. Therefore, there is a recollement of the unbounded derived categories
\[    \begin{tikzcd}
{\mathcal{D}\Big(k[GL_n]\Big)} \arrow[rr, "i_*"] &  & {\mathcal{D}\Big(k[M_n]\Big)} \arrow[rr, "j_n^*"] \arrow[ll, "i^!", bend left] \arrow[ll, "i^*"', bend right] &  & {\mathcal{D}\Big(k[M_{n-1}]\Big)} \arrow[ll, "j_*", bend left] \arrow[ll, "j_!"', bend right] \dt
\end{tikzcd}  \]
where $j_n:k[M_{n-1}] \arr k[M_n]$ denotes the standard inclusion $A \mapsto \left[\begin{array}{ c | c }
    A & 0 \\
    \hline
    0 & 0
  \end{array}\right]$.
\end{theorem}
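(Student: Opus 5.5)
The plan is to reduce the $\Ext$-comparison to a $\Tor$-vanishing statement for the ring map $i:k[M_n]\arr k[GL_n]$. This map is a surjective ring homomorphism. Its kernel is the two-sided ideal $J=k[M_n^{\text{sing}}]$, since the singular matrices form a two-sided ideal of the monoid. So $i_*$ is restriction of scalars along a ring epimorphism. A standard criterion says that $i_*:\mathcal{D}(k[GL_n])\arr\mathcal{D}(k[M_n])$ is fully faithful, i.e. $i$ is a homological epimorphism, if and only if
\[ \Tor^{k[M_n]}_s(k[GL_n],k[GL_n])=0 \quad \text{for } s>0, \]
given that $k[GL_n]\tensor[k[M_n]]k[GL_n]=k[GL_n]$ automatically for a surjection. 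Granting this, we use the adjunction $(i^*,i_*)$ with $i^*=k[GL_n]\tensor[k[M_n],der]-$:
\[ \Hom_{\mathcal{D}(k[M_n])}(i_*A,i_*B[s])\cong \Hom_{\mathcal{D}(k[GL_n])}(i^*i_*A,B[s]), \]
and show that the counit $i^*i_*A\arr A$ is a quasi-isomorphism. This reduces to $A=k[GL_n]$ by resolving $A$ by free $k[GL_n]$-modules, since $i_*$ is exact and $i^*$ commutes with sums.

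The key computation is the $\Tor$-vanishing. Use the resolution $C_\bullet\arr k[M_n]$ of $i_*k[GL_n]$ from Theorem \ref{resolution of GL_n}, and tensor it on the left with the right module $k[GL_n]$ (equivalently, use the right-module resolution from the remark by $k[E_IM_n]$). Since $k[M_ne_I]=k[M_n]e_I$, we get
\[ k[GL_n]\tensor[k[M_n]]k[M_ne_I]=k[GL_n]\,i(e_I). \]
For $I\neq\emptyset$, $e_I$ is a singular idempotent, so $i(e_I)=0$. Hence every $C_k$ dies after tensoring, and only $k[GL_n]\tensor[k[M_n]]k[M_n]=k[GL_n]$ survives in degree zero. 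So the higher $\Tor$ vanishes, which gives the isomorphism on $\Ext^*$ for all $A,B$. This also upgrades the injectivity remarked earlier to bijectivity.

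For the recollement I would use the standard result (e.g. Cline–Parshall–Scott, or Koenig/Nicolás–Saorín) that a homological ring epimorphism $R\arr R/ReR$ with $e$ idempotent yields a recollement of $\mathcal{D}(R/ReR)$, $\mathcal{D}(R)$ and $\mathcal{D}(eRe)$. Take $e=e_{n}$, the idempotent matrix $\mathrm{diag}(1,\ldots,1,0)$ of rank $n-1$. Two things must be checked. First, $ReR=J$: every singular matrix factors as $A=B e C$ with $B,C$ invertible, so $ReR$ is the span of the singular matrices. Second, $e k[M_n] e\cong k[M_{n-1}]$: the product $eAe$ is a single matrix of the block form given by $j_n$, so $e\,k[M_n]\,e$ is the monoid algebra of $eM_ne\cong M_{n-1}$. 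The functors are then $j_n^*=e\cdot -=\Hom(k[M_n]e,-)$, with adjoints $j_!=k[M_n]e\tensor[eRe,der]-$ and $j_*=\mathbf{R}\Hom_{eRe}(eR,-)$, while $i^*$ and $i^!=\mathbf{R}\Hom_{k[M_n]}(k[GL_n],-)$ are the derived adjoints of $i_*$. Exactness of the triangles and $j_n^*i_*=0$ then follow from the cited theorem, because the kernel of $j_n^*$ consists of complexes with cohomology killed by $e$, i.e. modules over $R/ReR$.

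I expect the main obstacle to be the careful identification of the idempotent ideal and the full faithfulness of $i_*$ on unbounded complexes. If the cited recollement theorem is unavailable in exactly this form, I would instead verify directly that $\ker j_n^*$ is the essential image of $i_*$. The argument would be that every cohomology module of such a complex is an $R/ReR$-module, and homological epimorphism plus a dévissage over truncations (using that the category is generated by homotopy colimits) shows the complex lies in the image.
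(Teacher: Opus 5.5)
Your proof is correct and takes the paper's route. The paper checks the dual condition $\Hom_{k[M_n]}(k[M_ne_I],k[GL_n])=e_I\,k[GL_n]=0$ on the resolution $C_\bullet$ (giving $\Ext^{>0}_{k[M_n]}(k[GL_n],k[GL_n])=0$), where you check $k[GL_n]\otimes_{k[M_n]}k[M_ne_I]=k[GL_n]\,i(e_I)=0$; both then apply the Geigle--Lenzing homological-epimorphism criterion and get the derived recollement from the abelian one (Kuhn's recollement is exactly the one attached to your idempotent $e$). One slip: a matrix of rank $r<n-1$ cannot be written as $BeC$ with $B,C$ invertible, but $ReR=J$ still holds because $B,C$ may be arbitrary, e.g.\ $PI_rQ=(PI_r)\,e\,Q$.
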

\begin{proof}

First we prove that
\[   \Ext^*_{k[M_n]}(k[GL_n],k[GL_n])=0  \]
for all $* >0$. Note that
\[    \Hom_{k[M_n]}(k[M_ne] , k[GL_n])=0   \]
since any $k[M_n]$-linear map $\varphi:k[M_ne] \arr k[GL_n]$ satisfies $\varphi(e)=\varphi(e^2)=e \cdot \varphi(e)=0$. Thus the complex $\Hom_{k[M_n]}(C_\bullet, k[GL_n])$ is acyclic, which implies the vanishing of cohomology $\Ext_{k[M_n]}^*(k[GL_n],k[GL_n])$ for $*>0$. So the result on $\Ext$ groups follows directly from the Theorem 4.4. in \cite{GEIGLE1991273}

The recollement of the derived categories follows immediately from the first part of the theorem and the recollement of the underlying abelian categories proved in \cite{Kuhn_1994}, Theorem 2.3.
\end{proof}

Let $\det$ be a determinant representation of $GL_n(k)$. Note that $i_* \det$ is isomorphic to the evaluation of the exterior power functor $\Lambda^n$ on the $n$-dimensional space $k^n$. Therefore, Theorem \ref{Kuhn's recollement on derived level}  together with an equivalence of categories $\mathcal{F}_n \simeq k[M_n]-\Mod$ gives us the following corollary on cohomology of general linear groups.
\begin{corollary}
Let $k$ be a finite field. Then
\[    H^*( GL_n(k),k) \simeq \Ext^*_{\mathcal{F}_n}(\Lambda^n,\Lambda^n) \dt \]
\end{corollary}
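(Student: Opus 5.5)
The plan is to chain three identifications, starting from group cohomology as an $\Ext$ group:
\[ H^*(GL_n(k),k) \simeq \Ext^*_{k[GL_n]}(k,k). \]
Since $\det$ is one-dimensional, we have $\Hom_k(\det,\det)\simeq k$ as a $GL_n$-module, with the conjugation action trivial. Hence, by the standard adjunction $\Ext^*_{k[G]}(M\otimes V, N\otimes V)\simeq \Ext^*_{k[G]}(M\otimes V\otimes V^*, N)$ applied with $V=\det$, we get
\[ \Ext^*_{k[GL_n]}(k,k)\simeq \Ext^*_{k[GL_n]}(\det,\det). \]
Concretely, tensoring a projective resolution of $k$ with the invertible module $\det$ gives a projective resolution of $\det$. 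Moreover, $\Hom_{k[GL_n]}(P\otimes\det, Q\otimes\det)=\Hom_{k[GL_n]}(P,Q)$, because $\det$ is a character.

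Next I apply Theorem \ref{Kuhn's recollement on derived level} with $A=B=\det$. This gives
\[ \Ext^*_{k[GL_n]}(\det,\det)\xrightarrow{\ \sim\ }\Ext^*_{k[M_n]}(i_*\det,i_*\det). \]

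Then I check that $i_*\det\simeq \Lambda^n(k^n)$ as $k[M_n]$-modules. The module $\Lambda^n(k^n)$ is one-dimensional, and a matrix $A$ acts on it by multiplication by $\det A$. This factor is $0$ exactly when $A$ is singular, and it is the character $\det$ on invertible matrices. This is precisely the $k[M_n]$-structure obtained by pulling $\det$ back along the ring map $i$.

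Finally, I transport along the Gabriel--Mitchell equivalence $\mathcal{F}_n\simeq k[M_n]\text{-}\Mod$, $F\mapsto F(k^n)$. This sends $\Lambda^n$ to $\Lambda^n(k^n)$, and an equivalence of abelian categories preserves $\Ext$ groups, which yields
\[ \Ext^*_{k[M_n]}(\Lambda^n(k^n),\Lambda^n(k^n))\simeq \Ext^*_{\mathcal{F}_n}(\Lambda^n,\Lambda^n). \]
One point needs attention here: $\mathcal{F}_n$ is defined with finite-dimensional targets. Its $\Ext$ should therefore be computed either via Yoneda extensions or by passing to the full module category. Because $k[M_n]$ is finite dimensional, this is harmless: the projectives $k[M_n e]$ are finite dimensional, so the finite-dimensional resolutions of Theorem \ref{resolution of GL_n} already live in the finite-dimensional category.

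Composing the isomorphisms gives the corollary. The only substantive input is Theorem \ref{Kuhn's recollement on derived level}, and the remaining steps are formal. The step most likely to need care is the twisting by $\det$ in the first identification, together with the compatibility between $\Ext$ in the finite-dimensional category $\mathcal{F}_n$ and $\Ext$ over $k[M_n]$ in the last step; neither should present a real obstacle.
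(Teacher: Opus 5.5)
Your proposal is correct and follows the paper's proof exactly. Both use the same chain: $H^*(GL_n(k),k)\simeq\Ext^*_{k[GL_n]}(k,k)\simeq\Ext^*_{k[GL_n]}(\det,\det)$, then the isomorphism induced by $i_*$ from Theorem~\ref{Kuhn's recollement on derived level}, the identification $i_*\det\simeq\Lambda^n(k^n)$, and finally the Gabriel--Mitchell equivalence. Your added remark about finite-dimensional targets is sound, but the right justification is that every finite-dimensional $k[M_n]$-module has a resolution by finitely generated projectives; the resolution of Theorem~\ref{resolution of GL_n} resolves $i_*k[GL_n]$, not $\Lambda^n$.
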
       

\begin{proof}
We have
\[    H^*(GL_n(k),k) \simeq \Ext^*_{GL_n(k)}(k,k) \simeq \Ext^*_{GL_n(k)}(\det,\det) \simeq \Ext^*_{\mathcal{F}_n}(\Lambda^n,\Lambda^n)    \]
where the last isomorphism is induced by the functor $i_*$.
\end{proof}

We remark that the above result is especially meaningful for $k=\mathbb{F}_p$ and small $n$. This result motivates us to understand the cohomology of the exterior power functor $\Lambda^n$ using the fact that it is the polynomial functor. Here comes the idea of comparing the ordinary functor category $\mathcal{F}_n$ with the category of polynomial functors $\mathcal{P}_{d,n}$, which was done in the stable case in \cite{Franjou_2008}. In the next section we compare these categories provided that the field is bigger than the degree of the functor.

\section{Comparison theorem in bounded functor categories}

In this part we will prove the unstable version of the weak comparison theorem appearing in \cite{franjou1999general} (Theorem 2.7). By unstable we mean the setup of bounded functor categories, i.e. the functors defined on vectors spaces of dimension $\leq n$.

First we recall the comparison theorem on rational group cohomology (cf. \cite{Cline1977RationalAG}, Theorem 6.6). Let $k$ be a finite field and $M,N$ be finite dimensional, rational $GL_n$-modules. Here $GL_n$ denotes the algebraic general linear group and $GL_n(k)$ - its group of $k$-points. Then for any positive integer $m$, there is a finite field extension $k \subset K$ and big enough $i$ such that the natural map
\[    \Ext^*_{GL_n}(M^{(i)},N^{(i)})  \arr \Ext^*_{GL_n(K)}(M,N) \]
is an isomorphism for all $0 \leq * \leq m$. Here $M^{(i)}$ denotes the $i$-th Frobenius twist of $M$. Note also, that the LHS stabilizes with respect to $i$, the number of Frobenius twists [ref. needed].
 
Let $\mathcal{P}_{d,n}$ denote the category of strict polynomial functors of degree $d$ defined on the category of $k$-vector spaces of dimension $\leq n$. Recall that the category $\mathcal{P}_{d,n}$ is equivalent to the category of $\Gamma^d(End(k^n))$-modules, as shown in \cite{chałupnik2022strict}. It is known that this category is equivalent to the full subcategory of the category of rational $GL_n$-modules consisting of polynomial, rational $GL_n$-modules of degree $d$. Moreover, it can be shown that the inclusion $Pol_{n,d} \arr  GL_n-\Mod$ induces an isomorphism on $\Ext$ groups (see \cite{Friedlander_1997}). It follows that the following composition
\[  \begin{tikzcd}
{\Ext^*_{\mathcal{P}_{dp^i,n}}(F^{(i)},G^{(i)})} \arrow[r] & {\Ext^*_{GL_n}(F^{(i)},G^{(i)})} \arrow[r] & {\Ext^*_{GL_n(K)}(F(K^n),G(K^n))}
\end{tikzcd}  \] 
is an isomorphism for $* \leq s$ for big enough field extension $K$ and big enough $i$. This isomorphism can be written as the composition
\[ \begin{tikzcd}  {\Ext^*_{\mathcal{P}_{dp^i,n}}(F^{(i)},G^{(i)})} \arrow[r] & {\Ext^*_{\mathcal{F}_n}(F,G)} \arrow[r] & {\Ext^*_{GL_n(K)}(F(K^n),G(K^n))} \dt
\end{tikzcd}    \]

As a corollary, the restriction map
\[  \begin{tikzcd} 
{\Ext^*_{\mathcal{F}_n}(F,G)} \arrow[r] & {\Ext^*_{GL_n(K)}(F(K^n),G(K^n))}
\end{tikzcd}   \]
is an epimorphism in a range (over a sufficiently big field extension $K$). We claim that both of these maps are actually isomorphisms.
\begin{theorem}[weak comparison theorem for bounded functor categories]
\label{weak comparison theorem for bounded functor categories}
Let $F,G \in \mathcal{P}_{d,n}$ be polynomial functors over a finite field $k$ and let $s \geq 0$ be an integer. Then for big enough field extension $k \subset K$ and big enough $i$, both depending on $s$ and the degree $d$, the restriction map
\[    \Ext^*_{\mathcal{P}_{dp^i,n}}(F^{(i)},G^{(i)}) \arr \Ext^*_{\mathcal{F}_n}(F,G)  \]
is an isomorphism for all $0 \leq * \leq s$. Hence, for the same field extension $K$, the restriction map
\[  \Ext^*_{\mathcal{F}_n}(F,G) \arr   \Ext^*_{GL_n(K)}(F(K^n),G(K^n)) \]
is an isomorphism for all $0 \leq * \leq s$.
\end{theorem}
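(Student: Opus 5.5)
The approach follows the strategy of Franjou--Friedlander--Suslin--Scorichenko, adapted to the bounded setting. Call the two maps
\[ \alpha: \Ext^*_{\mathcal{P}_{dp^i,n}}(F^{(i)},G^{(i)}) \arr \Ext^*_{\mathcal{F}_n}(F,G), \qquad \beta: \Ext^*_{\mathcal{F}_n}(F,G) \arr \Ext^*_{GL_n(K)}(F(K^n),G(K^n)) \dt \]
By the discussion preceding the statement, the composite $\beta\circ\alpha$ is an isomorphism for $*\leq s$ once $K$ and $i$ are large enough. Hence $\alpha$ is injective and $\beta$ is surjective in that range, and it suffices to show that one of them is bijective. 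I would aim to prove that $\alpha$ is surjective, or equivalently that $\beta$ is injective, by a dimension count. All groups involved are finite dimensional: $F,G$ are finite and $\mathcal{F}_n\simeq k[M_n]-\Mod$ with $k[M_n]$ a finite dimensional algebra. So it is enough to bound $\dim_k \Ext^*_{\mathcal{F}_n}(F,G)$ above by $\dim_k \Ext^*_{\mathcal{P}_{dp^i,n}}(F^{(i)},G^{(i)})$.

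\textbf{Step 1: base change to $K$.} Let $\mathcal{F}_n(K)$ be the category of functors on $K$-spaces of dimension $\leq n$, and let $t^*:\mathcal{F}_n(K)\arr \mathcal{F}_n$ be precomposition with $V\mapsto K\otimes_k V$ (followed by forgetting to $k$-spaces, or keeping $K$-coefficients). I would compute its adjoints by Kan extension. Because $K\otimes_k V\simeq V^{\oplus [K:k]}$ as a $k$-space, for a strict polynomial $G$ the functor $V\mapsto G_K(K\otimes_k V)$ decomposes, via the exponential/sum--diagonal formula, into a finite sum of tensor products of Frobenius twists $G_1(V)\otimes G_2(V)^{(1)}\otimes\cdots$. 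If the degree satisfies $d<|k|$ (which we may arrange by first enlarging $k$, since everything is compatible with base change), all summands other than the one of total ``twist degree'' zero contribute nothing to $\Ext$ from $F$ in a range. This is the unstable analogue of the FFSS vanishing for Frobenius-twisted tensor products, and I would prove it by the same bar/sum--diagonal argument in $\mathcal{F}_n$. The outcome is the base change isomorphism
\[ \Ext^*_{\mathcal{F}_n}(F,G)\otimes_k K \simeq \Ext^*_{\mathcal{F}_n(K)}(F_K,G_K) \qquad (*\leq s) \dt \]

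\textbf{Step 2: rerun the comparison over $K$.} Apply the preceding discussion with $k$ replaced by $K$ and $K$ replaced by a further extension $K'$. Then $\beta_K\circ\alpha_K$ is again an isomorphism, and strict polynomial $\Ext$ commutes with field extension. Under Step 1, $\alpha_K=\alpha\otimes K$ and $\beta$ factors through $\beta_K$. Injectivity of $\beta$ then reduces to the statement that every class in $\Ext_{\mathcal{F}_n(K)}(F_K,G_K)$ lies in the image of $\alpha_K$. I would try to get this from a ``generic'' argument. After enough twists, classes in $\mathcal{F}_n$ restricted from $\mathcal{P}_{dp^i,n}$ are exactly those invariant under the Galois action of $\mathrm{Gal}(K/k)$ on the sum--diagonal decomposition of Step 1. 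Comparing dimensions then yields $\dim \Ext_{\mathcal{F}_n}\leq \dim\Ext_{\mathcal{P}}$.

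\textbf{Main obstacle.} The hard step is the vanishing in Step 1 in the bounded category $\mathcal{F}_n$. The FFSS proof uses functors on all finite dimensional spaces, where the sum--diagonal adjunction $V\mapsto V\oplus V$ is available. In $\mathcal{F}_n$ one cannot double dimensions, so that adjunction breaks. My first attempt would be to use the projective resolution by the modules $k[M_n e]$ from Theorem~\ref{resolution of GL_n} and the recollement of Theorem~\ref{Kuhn's recollement on derived level}. These let me induct on $n$: the part of $\Ext$ supported on $\mathcal{F}_{n-1}$ is controlled by $j_n^*$, while the ``top'' part is controlled by $GL_n$-cohomology, where the Cline--Parshall--Scott--van der Kallen theorem is already available. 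Making the twist-degree vanishing compatible with this induction is where I expect most of the work.
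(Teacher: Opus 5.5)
Your opening reduction agrees with the paper. Since $\beta\circ\alpha$ is an isomorphism in the range, it suffices to show that $\beta$ is injective, or equivalently that $\alpha$ is surjective. There is a genuine gap after that: nothing in the proposal bounds $\Ext^*_{\mathcal{F}_n}(F,G)$ from above, and that bound is the whole content of the theorem.

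\emph{Step 2 is circular.} You reduce injectivity of $\beta$ to surjectivity of $\alpha_K$, which is the same theorem with $k$ replaced by $K$. The claim that the image of $\alpha$ is the $\mathrm{Gal}(K/k)$-invariant part is not argued. Galois descent would at best identify $\Ext_{\mathcal{F}_n(k)}$ with invariants inside $\Ext_{\mathcal{F}_n(K)}$, and you do not control the size of the latter.

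\emph{Step 1 is unproved, as you acknowledge, and partly misdescribed.} For strict polynomial $G$ one simply has $G_K(K\otimes_k V)\simeq K\otimes_k G(V)$, with no twists. Frobenius twists enter only through $\Phi(K\otimes_k W)\simeq\Phi(\bigoplus_{\sigma\in\mathrm{Gal}(K/k)}W^{\sigma})$ for a $K$-space $W$, i.e.\ through restriction of scalars, which leaves $\Vect_{\leq n}$. The paper's proof avoids base change altogether: it effectively works with $\mathcal{F}_n$ over $K$, i.e.\ with $K[M_n(K)]-\Mod$.

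\emph{The recollement does not connect to Cline--Parshall--Scott--van der Kallen.} The $GL_n$-term it produces is $\Ext_{GL_n}(Li^*F,Ri^!G)$, and $Li^*F$ is not $F(K^n)$. For $F=\Lambda^{n-1}$ it has nonzero $H_1$ (the $\Tor_1$ computed in the first section), whose dimension grows with $|K|$. So it does not come from a rational $GL_n$-module, and the theorem cannot be applied to it.

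The missing idea, which is how the paper argues, is to compare $K[M_n(K)]-\Mod$ with $K[GL_n(K)]-\Mod$ directly. One uses restriction and its right adjoint, coinduction $\Hom_{K[GL_n(K)]}(K[M_n(K)],-)$. Restriction is fully faithful on $\mathcal{P}_{d,n}$ for large $K$: the composite $\Hom_{\mathcal{P}}\arr\Hom_{\mathcal{F}_n}\arr\Hom_{GL_n(K)}$ is bijective and the second map is injective. The higher derived functors of coinduction at $F(K^n)$ are $\Ext^i_{GL_n(K)}(K[M_n],F(K^n))$. These split over the $GL_n$-orbits in $M_n$, which are indexed by kernels, into groups $H^i(GL_n^r,F(K^n))$ with $GL_n^r=\Stab(I_r)\simeq K^{r(n-r)}\rtimes GL_{n-r}(K)$. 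They are then attacked with the Lyndon--Hochschild--Serre spectral sequence and the central-weight argument: a central scalar $\lambda$ acts on $H^*(GL_m(K),M)$ both trivially and by $\lambda^e$ when $M$ has weight $e$. This is meant to make $\beta$ bijective directly, with no dimension count.

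If you take this route, two points need more care than the paper gives them.
\begin{enumerate}
\item The paper's adjunction lemma uses that the unit $B'\arr GFB'$ is an isomorphism. Full faithfulness on $\mathcal{P}_{d,n}$ does not imply this, and it fails here for $n\geq 2$ (e.g.\ $F=I$), because the lower-rank orbits contribute invariants $F(K^n)^{GL_n^r}\supseteq F(K^r)$. So you must also show that $\Ext^{\leq s}_{\mathcal{F}_n}$ from $\mathcal{P}_{d,n}$ into the cokernel of the unit vanishes.
\item The summand $F(K^r)\subseteq F(K^n)^{K^{r(n-r)}}$ has central weight $0$ and trivial $GL_{n-r}(K)$-action, so the central-weight argument does not reach it. Killing $H^{i}(GL_{n-r}(K),F(K^r))$ for $0<i\leq s$ needs Quillen's vanishing of $H^{i}(GL_{n-r}(K),K)$ in low degrees for $|K|$ large.
\end{enumerate}
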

 \begin{proof}

 As noticed earlier, it is enough to show that for big enough $K$, the restriction map
\[  \begin{tikzcd} 
{\Ext^*_{\mathcal{F}_n}(F,G)} \arrow[r] & {\Ext^*_{GL_n(K)}(F(K^n),G(K^n))}
\end{tikzcd}   \]
is an isomorphism in the given range. Choose an extension $k \subset K$ so that $|K| \geq d$ and for all polynomial functors $f,g \in \mathcal{P}_{d,n}$ the map
\[
\begin{tikzcd}
\Hom{\mathcal{P}_{dp^i,n}}(f^{(e)},g^{(e)}) \arrow[r] &  {\Hom_{GL_n(K)}(f(K^n),g(K^n))}
\end{tikzcd} 
\]
is an isomorphism for $e \gg 0$. This can be done uniformly by \cite{Cline1977RationalAG}, Theorem 6.6. Indeed, any functor $f \in \mathcal{P}_{d,n}$ induces a polynomial, rational $GL_n$-module of degree $d$ (see Theorem 2.2 in \cite{chałupnik2022strict}), so there are finitely many possible weights of the maximal torus $T \in GL_n$.  Then the map
\[ \Hom_{\mathcal{F}_n}(f,g) \arr \Hom_{GL_n(K)}(f(K^n),g(K^n))    \]
is an epimorphism, hence also an isomorphism. This amounts to the fact that for such $K$, the restriction map \[ j_*:K[M_n(K)]-\Mod \arr K[GL_n(K)]-\Mod \] is fully faithfull while restricted to the full subcategory $\mathcal{P}_{d,n} \subset K[M_n(K)]-\Mod$ (note that the size of $K$ depends only on $d$). In particular, we can treat $\mathcal{P}_{d,n}$ as a full subcategory of $GL_n(k)-\Mod$. Thus it would be sufficient to prove that for any strict polynomial functor $F$ we can find such a big extension $K$, so that
\[   \Ext^i_{GL_n(K)}(K[M_n],F(K^n))=0   \]
for all $s \geq i >0$, as the following lemma says.
\begin{lemma}
Let $\mathcal{A},\mathcal{B}$ be abelian categories, where $\mathcal{B}$ admits enough injectives. Let
\[
\begin{tikzcd}
\mathcal{A} \arrow[r, "F"] & \mathcal{B} \arrow[l, "G", bend left]
\end{tikzcd}
\]
be a pair of adjoint functors, $F \dashv G$. Assume that $F$ is exact and fully faithful on a subcategory $\mathcal{A}' \subset \mathcal{A}$ and that $R^i G(FA')=0$ for all $A' \in \mathcal{A}'$ and $s \geq i>0$. Then for any $A',B' \in \mathcal{A}'$, the map
\[  \begin{tikzcd} 
{\Ext^i_{\mathcal{A}}(A',B')} \arrow[r] & {\Ext^i_{\mathcal{B}}(F(A'),F(B'))}
\end{tikzcd}   \]
is an isomorphism for all $s \geq i \geq 0$.
\end{lemma}
\begin{proof}
Let $FB' \arr I_\bullet$ be an injective resolution of $FB'$. Since $R^i G(FB')=0$ for all $0<i \leq s$ and $G$ preserves injective objects, the complex $G(I_\bullet)_{\bullet \leq s+1}$ may be extended an injective resolution of an object $GFB'$, which is isomorphic to $B'$ via the counit map. Therefore
\[    \Ext^i_{\mathcal{A}}(A',B')\simeq H^i\big(\Hom_{\mathcal{A}}(A',G(I_\bullet))\big) \simeq H^i(\Hom_{\mathcal{B}}(FA',I_\bullet)) \simeq \Ext^i_{\mathcal{B}}(FA',FB') \dt  \]
for all $0 \leq i \leq s$.
\end{proof}

Therefore we need to study $K[M_n]$ treated as a representation of $GL_n$ (via left multiplication on $M_n$). Consider the split rank-filtration of $k[M_n]$
\[  k[M_n] \simeq \bigoplus_{r=0}^n k[M_n^r]  \]
where $M_n^r \subset M_n$ denotes the subset of all matrices of rank $r$. Note that $A,B \in M_n^r$ are in the same orbit under $GL_n$-action if and only if $\ker A=\ker B$. In fact there is no distinction between different orbits within given rank, i.e. $GL_n \cdot A \simeq GL_n \cdot B$ via equivariant map, provided that $A,B \in M_n^r$. For this, let $C \in GL_n$ be such that $C(\ker B)=\ker A$ and consider the map 
\[  \varphi_C:GL_n \cdot A \arr GL_n \cdot B , \hspace{30pt} GA \mapsto GAC \dt\]
It is well-defined (since $\ker(GAC)=\ker B$, hence $GAC \in GL_n \cdot B$) and of course $GL_n$-equivariant. Therefore it is sufficient to establish vanishing of higher derived functors $R^i\Hom_{k[GL_n]}(k[GL_n \cdot I_r],-)$, where $I_r$ denotes the standard projection matrix on $\lin(e_1, \ldots, e_r)$,
\[   I_r=  \left[\begin{array}{ c | c }
    \Id_{r \times r} & 0 \\
    \hline
    0 & 0
  \end{array}\right] \dt \]

We compute that
\[   GL_n^r:= \Stab(I_r)=\Big\{ \left[\begin{array}{ c | c }
    \Id_{r \times r} & B \\
    \hline
    0 & A
  \end{array}\right] \in GL_n |\hspace{10pt} A \in GL_{n-r},B \in M_{r \times(n-r)} \Big\} \]
and therefore
\[    \Ext^i_{k[GL_n]}(k[GL_n \cdot I_r],-) \simeq H^i(GL_n^r,-) \dt   \]
Note that the subgroup of matrices of the form $\left[\begin{array}{ c | c }
    \Id_{r \times r} & A \\
    \hline
    0 & \Id_{(n-r) \times (n-r)}
  \end{array}\right] \subset GL_n^r$, isomorphic to the additive group $K_{add}^{r(n-r)}$, is a normal subgroup of $GL_n^r$, so $GL_n^r \simeq K_{add}^{r(n-r)} \rtimes GL_{n-r}$. Hence, by the Lyndon–Hochschild–Serre spectral sequence, it is enough to prove that for big enough field extensions $k \subset K$, we have
  \[   H^p(GL_{n-r},H^q( K_{add}^{r(n-r)},F)) =0   \]
  for $p+q>0$.
  
   The strategy to show it is following; we will prove that for every $r,i$, the (bounded) functor
   \[      V \mapsto H^i(\Hom(V,k^r),F(k^r \oplus V) ) \]
   is of degree $\leq d$ (in the terms of the action of the center $k^* \subset GL_n(k)$), so that the following vanishing result will do the job.
   
   \begin{df}
   Let $F \in \mathcal{F}_n$ be a functor over $k$. We will say that $F$ is of degree $d$, if $d$ is the only weight of the action of the center $k^*\subset GL_n(k)$, so
   \[    F(\lambda \cdot )(\alpha)=\lambda^d \cdot F(\alpha)   \]
   for all $\alpha \in k^n$ and $\lambda \in k^*$. The full subcategory of functors of degree $d$ will be denoted by $\mathcal{F}_{d,n} \subset \mathcal{F}_n$.
   \end{df}
   
   \begin{lemma}
   \label{vanishing of cohomology in coeffecients in a functor}
   Let $F \in \mathcal{F}_n$ be a functor of degree $d$, that is every scalar matrix $\lambda \in GL_n$ acts on $F(k^n)$ via multiplication by $\lambda^d$. Let $k \subset K$ be a field extension with $|K| \geq d$. Then
   \[    H^*(GL_n(K),F(K^n))=0 \dt    \]
  In particular, for every $F \in \mathcal{P}_{d,n}$ we have
  \[    H^*(GL_n(K),F(K^n))=0 \dt    \]
   \end{lemma}
   \begin{proof}
   Let $\lambda \in GL_n(K)$ be the scalar matrix corresponding to $\lambda^d \neq 0,1$ in $K$. Then $\lambda$ acts on $F(k^n)$ by multiplication by $\lambda^d \neq 1$, so it acts on cohomology $H^*(GL_n(K),F(K^n))$ by multiplication by $\lambda^d$ (since $\lambda$ is central in $GL_n(K)$). On the other hand, any element $g \in GL_n(K)$ acts trivially on cohomology $H^*(GL_n(K),F(K^n))$, hence $H^*(GL_n(K),F(K^n))=0$.
   \end{proof}
   
   We will need one more property of functors of degree $d$.

\begin{lemma}
\label{degree is preserved under extensions}
Let 
\[  0 \arr F \arr G \arr H \arr 0   \]
be a short exact sequence of functors. Then $F,H$ are of degree $\leq d$ if and only if $G$ is of degree $\leq d$. In particular, if a (finite) functor is filtered by the functors of degree $\leq d$, then it is also of degree $\leq d$.
\end{lemma}
\begin{proof}
Restrict the given sequence to the sequence of representations of the center $k^* \subset GL_n(k)$. Since every such a representation is semisimple, we know that
\[     G(k^n) \simeq F(k^n) \oplus H(k^n)   \]
as representations of $k^*$ and therefore every weight in $G(k^n)$ comes from $F(k^n)$ or $H(k^n)$. The induction on the number of components in filtration shows that filtration of degree $\leq d$ implies degree $\leq d$.
\end{proof}

Define the following functor $F_r^i \in \mathcal{F}_{n-r}$:
  \[   F^i_r(V):=H^i(\Hom(V,k^r),F(k^r \oplus V) )  \]
  where $\Hom(V,k^r)$ is the additive group of linear maps, and $F(k^r \oplus V)$ is given the structure of representation of $\Hom(V,k^r)$ via action of matrices of the form $\left[\begin{array}{ c | c }
    I_{r \times r} & A \\
    \hline
    0 & I_{(n-r) \times (n-r)}
  \end{array}\right]$. Indeed, note that for any map $\varphi:V \arr W$ we obtain a map of $\Hom(W,k)$-representations $F(\Id \oplus \varphi):F(k^r \oplus V) \arr F(k^r \oplus W)$ (as quick computations $(\alpha,\beta) \mapsto (\alpha,\varphi(\beta)) \mapsto \alpha+A\varphi(\beta),\varphi(\beta))$ and $(\alpha,\beta) \mapsto (\alpha+A \varphi(\beta),\beta) \mapsto (\alpha+A \varphi(\beta),\varphi(\beta))$ show). Therefore there is also the induced map on cohomology 
  \[  H^*(\Hom(V,k^r),F(k^r \oplus V) ) \arr  H^*(\Hom(W,k^r),F(k^r \oplus W) ) \dt \]
  This is functorial in $V$, since the restriction map on the group cohomology is functorial. We would like to show that in a sufficiently big field extension $K$, $F^i_r$ is of degree $\leq d$, i.e. $F^i_r \in \bigoplus_{e=0}^d \mathcal{F}_{e,n-r}$ (here $\mathcal{F}_{d,n} \subset \mathcal{F}_n$ denotes the subcategory of functors of degree $d$).
  
 For $i=1$ we simply get the following map on morphisms-sets
 \[   \Hom\big(\Hom(V,k^r),F(k^r \oplus V) \big) \arr \Hom\big(\Hom(W,k^r),F(k^r \oplus W)\big)   \]
 so that $f \mapsto F(\Id \oplus \varphi) \circ f \circ \varphi^*$ for any $\varphi :V \arr W$. Notice that that the functor $\tilde{F}_r : V \mapsto F(k^r \oplus V)$, $\tilde{F}_r:\varphi \mapsto F(\Id \oplus \varphi)$ is again polynomial of degree at most $d$, which simply follows from the fact that the cross-effects of the strict polynomial functor of degree $d$ is again strict polynomial of degree $< d$.
 
 For $i>1$ we will use the degree-preserving filtration of the cohomology of additive group in coefficients $F$ by the cohomology in trivial coefficients, which is well-known to be isomorphic to
 \[   H^*(K_{add},K) \simeq \Lambda(V) \otimes S(W)    \]
 where $V$ is spanned on $m$ vectors (where $|K|=p^m$) in the cohomological degree $1$ and $W$ is the Bockstein of $V$ (see Theorem 4.1 in \cite{Cline1977RationalAG}). In particular, $H^1(K_{add},K) \simeq V$ and therefore, if $F$ happens to have a trivial evaluation as a representation of the additive group $\Hom(V,K^r)$, then the result follows from the above description and the Künneth formula.

 If $F$ has a non-trivial evaluation, then we resolve $F$ with the tensors of symmetric power functors $S_\bullet$ (which are known to co-generate $\mathcal{P}_{d,n}$) so there is a hypercohomology spectral sequence
 \[     H^i\big(K_{add},S_j(K^r \oplus V)\big) \implies H^{i+j}\big(K_{add},F(K^r \oplus V)\big) \dt \]
We make the following observation: the functor $V \mapsto S^d(K^r \oplus V)$ is filtered by the following subrepresentations of the group $K_{add} \rtimes K^* \subset GL_n$:
 \[    S^i(K^r)\otimes S^{d-i}(V)  \]
starting from $i=0$ to $i=d$. This observation implies that every representation $S_j(K^r \oplus V)$ is filtered with trivial $K_{add}$-modules with weight $\leq d$ of the maximal torus $K^* \subset GL(V)$. Thus, the group cohomology 
 \[ H^i\big(K_{add},S_j(K^r \oplus V)\big) \]
 is also filtered with the subquotients of $H^i(K_{add},K)$ and therefore by Lemma \ref{degree is preserved under extensions} it has also degree $\leq id$ (we need to multiply by $i$, since in the $i$-th cohomological degree I do have a composition of the given functor $F$ and polynomial functors of degree $\leq i$), so by lemma \ref{vanishing of cohomology in coeffecients in a functor} we can find such a big field extension $K$, for which $H^i(GL_n^r,F)=0$ for all $r$ and therefore $\Ext^i_{GL_n(K)}(K[M_n],F(K^n))=0$ for all $0 < i \leq s$, what we wanted to prove.
\end{proof}

We remark that in the above argument we show that the vanishing of the group cohomology $H^*(GL_n^r,F)$ in a sufficiently big field extension $K$ for all $r=0, \ldots, n$ and for all polynomial functors of degree $d$. It turns out (see Theorem \ref{vanishing of cohomology in coeffecients in a functor}) that the only relevant subgroup is the group of matrices
\[ \Big\{ \left[\begin{array}{ c | c }
    \Id_{r \times r} & A \\
    \hline
    0 & \lambda \cdot \Id
  \end{array}\right] |\hspace{10pt} A  \in M_{r \times(n-r)}, \lambda \in K^* \Big\}     \subset GL_n^r \]
  which is isomorphic to the semi-direct product $G:=\Hom(V,k^r) \rtimes K^*$. Here $K^*$ corresponds to the center of $GL(V)$.  The Clifford theory (cf. \cite{nagao1989representations}, 2. Theorem 3.1) asserts that every irreducible representation of this subgroup is $1$-dimensional, hence we can find the appropriate filtration of $F$ for free.
  \begin{lemma}
  Let $V$ be an irreducible $K$-representation of the group $G=K_{add}^{m} \rtimes K^*$. Then $V$ is $1$-dimensional.
  \end{lemma}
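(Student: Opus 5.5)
The plan avoids Clifford theory. I will combine two standard facts about modular representations of finite groups: a normal $p$-subgroup acts trivially on every irreducible representation in characteristic $p$, and the remaining quotient $K^*$ is cyclic of order prime to $p$ whose eigenvalues already lie in $K$. Let $p=\operatorname{char} K$ and $|K|=q=p^m{}'$. Write $N:=K_{add}^{m}\lhd G$. Then $N$ is a finite elementary abelian $p$-group and $G/N\simeq K^*$ is cyclic of order $q-1$.

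First, $V$ is finite dimensional. Indeed, for any $0\neq v\in V$ the span of $G\cdot v$ is a nonzero subrepresentation, hence equals $V$ by irreducibility, and it is spanned by at most $|G|$ vectors. In particular $V$ is a finite set of cardinality $q^{\dim V}$, a power of $p$.

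Second, I show that $N$ acts trivially on $V$. The $N$-orbits on the finite set $V$ have sizes dividing $|N|$, so they are powers of $p$. Counting modulo $p$ gives $|V^N|\equiv |V|\equiv 0 \pmod p$. Since $0\in V^N$, this forces $V^N\neq 0$. Because $N$ is normal in $G$, the subspace $V^N$ is $G$-stable: for $g\in G$, $n\in N$ and $v\in V^N$ we have $n g v = g (g^{-1}ng) v = gv$. Irreducibility then gives $V^N=V$. Hence $V$ is an irreducible representation of the quotient $G/N\simeq K^*$.

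Finally, $K^*$ is cyclic, generated by some $\zeta$ of order $q-1$. The operator $T$ by which $\zeta$ acts on $V$ satisfies $T^{q-1}=\Id$. The polynomial $x^{q-1}-1=\prod_{a\in K^*}(x-a)$ splits into distinct linear factors over $K$, so $T$ is diagonalizable over $K$ with eigenvalues in $K^*$. Any eigenline of $T$ is then a $K^*$-stable subspace, hence a $G$-stable subspace of $V$. By irreducibility $V$ equals this line, so $\dim_K V=1$.

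The only step requiring real care is the second one, the triviality of the unipotent normal subgroup $N$. That is where characteristic $p$ is essential, and it is handled by the fixed-point counting above. I also note that the argument never uses the particular action of $K^*$ on $K_{add}^m$, so it applies to any such semidirect product, in particular to the subgroup $\Hom(V,k^r)\rtimes K^*$ considered above.
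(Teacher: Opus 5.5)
Your proof is correct and is essentially the paper's argument. The paper also shows that the normal $p$-subgroup $N=K_{add}^m$ has nonzero fixed vectors, notes that their $G$-translates span $V$ and are again $N$-fixed by normality, concludes that $N$ acts trivially, and then uses that irreducible $K$-representations of $K^*$ are one-dimensional. You simply prove those two quoted facts, by orbit counting mod $p$ and by the splitting of $x^{q-1}-1$ over $K$. Contrary to your opening remark, the paper's proof of this lemma does not use Clifford theory either.
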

  \begin{proof}
  Note that the only irreducible representation of the group $N:=K_{add}^m$ is the trivial representation. Let $W \subset V$ be the trivial $N$-subrepresentation of $V$. Then $\sum_{g \in G} gW \subset V$ is a $G$-subrepresentation of $V$, hence $\sum_{g \in G} gW =V$ by irreducibility. Note that $N$ acts on each part $gW$ trivially, hence $N$ acts on $V$ trivially as well. Since every irreducible representation of $K^*$ is $1$-dimensional, we are done.
  \end{proof}

\printbibliography

\end{document}